\newtheorem*{lemas}{Lemma}
\newtheorem*{teors}{Theorem}
\begin{document}

\title{Addendum to 'Direct limits in the heart of a t-structure: The case of a torsion pair'}

\author{Carlos E. Parra 
\\
Departamento de Matem\'aticas\\ Universidad de los Andes \\ ({\bf 5101}) M\'erida\\ VENEZUELA\\
{\it carlosparra@ula.ve} \\  \\ Manuel Saor\'in 
\\ Departamento de Matem\'aticas\\
Universidad de Murcia, Aptdo. 4021\\
30100 Espinardo, Murcia\\
SPAIN\\ {\it msaorinc@um.es} }


\date{}

\maketitle


\begin{abstract}

{\bf Let $\mathcal{G}$ be a Grothendieck category, let $\mathbf{t}=(\mathcal{T},\mathcal{F})$ be a torsion pair in $\mathcal{G}$ and let $(\mathcal{U}_\mathbf{t},\mathcal{W}_\mathbf{t})$ be the associated Happel-Reiten-Smal$\o$ t-structure in the derived category $\mathcal{D}(\mathcal{G})$. We prove that the heart of this t-structure is a Grothendieck category if, and only if, the torsionfree class $\mathcal{F}$ is closed under taking direct limits in $\mathcal{G}$. 
 }
\end{abstract}

The present short note gives a definite answer to a question left open in \cite{PS}. All throughout the paper, let $\mathcal{G}$ be a Grothendieck category and $\mathbf{t}=(\mathcal{T},\mathcal{F})$ be a torsion pair in $\mathcal{G}$. The Happel-Reiten-Smal$\o$ t-structure in the (unbounded) derived category $\mathcal{D}(\mathcal{G})$ associated to $\mathbf{t}$ is the pair $(\mathcal{U}_\mathbf{t},\mathcal{U}_\mathbf{t}^{\perp}[1])=(\mathcal{U}_\mathbf{t},\mathcal{W}_\mathbf{t})$
in $\mathcal{D}(\mathcal{G})$, where:

\begin{center}
$\mathcal{U}_\mathbf{t}=\{X\in\mathcal{D}^{\leq 0}(\mathcal{G}):$
$H^0(X)\in\mathcal{T}\}$

$\mathcal{W}_\mathbf{t}=\{Y\in\mathcal{D}^{\geq -1}(\mathcal{G}):$
$H^{-1}(Y)\in\mathcal{F}\}$.
\end{center}
(see \cite{HRS}).

In such a case the heart $\mathcal{H}_\mathbf{t}=\mathcal{U}_\mathbf{t}\cap\mathcal{W}_\mathbf{t}$ of the t-structure, which is an abelian category by \cite{BBD}, consists of the complexes $M\in\mathcal{D}(\mathcal{G})$ such that $H^j(M)=0$, for $j\neq -1,0$, that $H^{-1}(M)\in\mathcal{F}$ and that $H^0(M)\in\mathcal{T}$.  One of the main results in \cite{PS}, its Theorem 4.8, implies that if $\mathcal{H}_\mathbf{t}$  is a Grothendieck category, then the torsionfree class $\mathcal{F}$ is closed under taking direct limits in $\mathcal{G}$. The main result of this note says that the converse is also true, thus anwering in the affirmative Question 4.11 in the mentioned paper and fulfilling the task started in earlier papers by Colpi, Gregorio, Mantese and Tonolo (see  \cite{CGM}, \cite{CMT} and \cite{CG}).

The reader is referred to \cite{PS} and the references therein for all the terminology and notation used in this paper. The crucial auxiliary result is the following.

\begin{lemas}[\bf{1.1}] \label{lem.representation-objects-heart}
Each object  of $\mathcal{H}_\mathbf{t}$ can be represented by a complex $N:\cdots \longrightarrow 0 \longrightarrow N^{-1} \longrightarrow N^{0} \longrightarrow 0 \longrightarrow \cdots $ 
, concentrated in degrees $-1$ and $0$, such that $N^{-1}$ is an injective object of $\mathcal{G}$. Suppose that $N$ is such a complex and let $M: \cdots \longrightarrow 0\longrightarrow M^{-1}\longrightarrow M^0\longrightarrow 0 \longrightarrow \cdots$ be a complex concentrated in degrees $-1,0$ which is in $\mathcal{H}_\mathbf{t}$. Then the canonical map

\begin{center}
$\text{Hom}_{\mathcal{K}(\mathcal{G})}(M,N)\longrightarrow\text{Hom}_{\mathcal{D}(\mathcal{G})}(M,N)=\text{Hom}_{\mathcal{H}_\mathbf{t}}(M,N)$
\end{center}
is bijective.
\end{lemas}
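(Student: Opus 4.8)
The plan is to prove the two assertions separately: the representation statement by a pushout construction, and the bijectivity by reducing the computation of $\Hom_{\mathcal{D}(\mathcal{G})}(M,N)$ to a vanishing statement in the homotopy category.

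For the representation statement, I would start from the standard fact that every object of $\mathcal{H}_\mathbf{t}$ is isomorphic in $\mathcal{D}(\mathcal{G})$ to a complex $M$ concentrated in degrees $-1,0$ (take any representative and apply the good truncation $\tau^{\geq -1}\tau^{\leq 0}$). Given such an $M\colon M^{-1}\xrightarrow{d} M^0$, choose a monomorphism $\iota\colon M^{-1}\hookrightarrow I$ with $I$ injective and form the pushout $P$ of $d$ and $\iota$. Setting $N^{-1}=I$, $N^0=P$ and letting the differential be the canonical map $\bar\iota\colon I\to P$, the pair $(\iota,\,M^0\to P)$ defines a chain map $M\to N$. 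I would then verify it is a quasi-isomorphism: the short exact sequence $0\to M^{-1}\xrightarrow{(\iota,-d)} I\oplus M^0\to P\to 0$ gives $\Ker\bar\iota\cong\Ker d$ and $\mathrm{coker}\,\bar\iota\cong\mathrm{coker}\,d$, so $H^{-1}$ and $H^0$ are preserved. Hence $N$ represents the same object of $\mathcal{H}_\mathbf{t}$ and has injective $(-1)$-term.

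For the bijectivity, I would compute $\Hom_{\mathcal{D}(\mathcal{G})}(M,N)$ through the calculus of right fractions: it is the filtered colimit of $\Hom_{\mathcal{K}(\mathcal{G})}(L,N)$ over quasi-isomorphisms $s\colon L\to M$, and the canonical map is the structure map at $s=1_M$. Using good truncation again, the quasi-isomorphisms with $L$ concentrated in degrees $-1,0$ are cofinal, so I may assume $L$ is such a two-term complex. For each such $s$, let $C$ be its mapping cone; then $C$ is acyclic and concentrated in degrees $-2,-1,0$. Applying the cohomological functor $\Hom_{\mathcal{K}(\mathcal{G})}(-,N)$ to the triangle $L\xrightarrow{s} M\to C\to L[1]$ yields an exact sequence in which $s^{*}$ is flanked by $\Hom_{\mathcal{K}(\mathcal{G})}(C,N)$ and $\Hom_{\mathcal{K}(\mathcal{G})}(C,N[1])$; it therefore suffices to show both of these groups vanish, for then $s^{*}$ is bijective for every $s$ and the colimit collapses onto $\Hom_{\mathcal{K}(\mathcal{G})}(M,N)$.

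The vanishing of $\Hom_{\mathcal{K}(\mathcal{G})}(C,N)$ is formal: since $C^{1}=0$, acyclicity makes $d_C^{-1}\colon C^{-1}\to C^0$ an epimorphism, and a direct chain-level computation shows every chain map $C\to N$ is null-homotopic, with no hypothesis on $N$ needed. The heart of the matter — and the one place where injectivity of $N^{-1}$ is indispensable — is the vanishing of $\Hom_{\mathcal{K}(\mathcal{G})}(C,N[1])$. A chain map $\phi\colon C\to N[1]$ has components $\phi^{-2}\colon C^{-2}\to N^{-1}$ and $\phi^{-1}\colon C^{-1}\to N^{0}$, and building a null-homotopy requires first a map $h^{-1}\colon C^{-1}\to N^{-1}$ with $h^{-1}d_C^{-2}=\phi^{-2}$; since acyclicity forces $d_C^{-2}$ to be a monomorphism, such an extension exists exactly because $N^{-1}$ is injective. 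The remaining component $h^{0}$ is then produced using that $d_C^{-1}$ is epi, the needed compatibility being precisely the chain-map identity for $\phi$. I expect this extension step to be the main obstacle, as it is the sole point invoking the special form of $N$; the surrounding reductions (two-term truncation and cofinality) are routine, but must be arranged carefully so that the cone $C$ lands in degrees $-2,-1,0$.
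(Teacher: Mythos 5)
Your proposal is correct, but it follows a genuinely different route from the paper's. For the representation statement the paper takes a bounded-below injective resolution $E:0\to E^{-1}\to E^0\to\cdots$ of the given object (available because objects of $\mathcal{H}_\mathbf{t}$ lie in $\mathcal{D}^{\geq -1}(\mathcal{G})$) and passes to the good truncation $\tau^{\leq 0}E=(E^{-1}\to \Ker d^0)$; your pushout of $M^{-1}\hookrightarrow I$ along $d_M$ achieves the same end more directly, and your verification via the short exact sequence $0\to M^{-1}\to I\oplus M^0\to P\to 0$ is exactly the acyclicity of the mapping cone. For the bijectivity the paper works on the other side of the Hom: it embeds $N$ into the complex $E=(N^{-1}\to E^0(N^0)\to E^1(N^0)\to\cdots)$ built from an injective resolution of $N^0$, notes that $E/N$ is acyclic and concentrated in degrees $\geq 0$ so that $\Hom_{\mathcal{C}(\mathcal{G})}(M,E/N)=0$, and concludes using that the bounded-below complex of injectives $E$ is homotopically injective. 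Your route through the calculus of right fractions and the two vanishings $\Hom_{\mathcal{K}(\mathcal{G})}(C,N)=\Hom_{\mathcal{K}(\mathcal{G})}(C,N[1])=0$ is more elementary and has the virtue of isolating precisely where the injectivity of $N^{-1}$ enters (extending $\phi^{-2}$ along the monomorphism $d_C^{-2}$), whereas the paper buries that input in the K-injectivity of $E$; both vanishing computations are correct as you describe them. The one step you should write out with care is the cofinality claim: the complexes concentrated in degrees $-1,0$ are not literally cofinal among quasi-isomorphisms $L\to M$, since the good truncation gives a quotient map $\tau^{\leq 0}L\twoheadrightarrow\tau^{\geq -1}\tau^{\leq 0}L$ rather than a quasi-isomorphism into $\tau^{\leq 0}L$. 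The reduction still works because $M$ and $N$ are both concentrated in degrees $-1,0$, so every chain map from $\tau^{\leq 0}L$ to either factors uniquely through $\tau^{\geq -1}\tau^{\leq 0}L$ (and null-homotopies descend along the degreewise epimorphism); hence every roof is equivalent to one with two-term apex and every relation can be tested there. With that point made explicit, your proof is complete.
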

\begin{proof}
Each object of $\mathcal{H}_\mathbf{t}$ is a quasi-isomorphic to a complex of injective objects 

\begin{center}
$E: \cdots \longrightarrow 0\longrightarrow E^{-1}\stackrel{d^{-1}}{\longrightarrow}E^{0}\stackrel{d^{0}}{\longrightarrow} \cdots \stackrel{\hspace{0.3 cm}d^{n-1}}{\longrightarrow}E^{n}\stackrel{d^{n}}{\longrightarrow}  \cdots $
\end{center}
We then take the 'intelligent' truncation $\tau^{\leq 0}E: \cdots \longrightarrow 0\longrightarrow E^{-1}\longrightarrow Z^0\longrightarrow 0 \longrightarrow \cdots$, where $Z^0=\text{Ker}(d^0)$, which is obviously isomorphic to $E$ in $\mathcal{D}(\mathcal{G})$.

Let now $M: \cdots \longrightarrow 0\longrightarrow M^{-1}\stackrel{d_M}{\longrightarrow}M^0\longrightarrow 0 \longrightarrow \cdots$ and
$N: \cdots \longrightarrow 0\longrightarrow N^{-1}\stackrel{d_N}{\longrightarrow}N^0\longrightarrow 0 \longrightarrow \cdots$ be two complexes concentrated in degrees $-1$ and $0$ which are in $\mathcal{H}_\mathbf{t}$, such that $N^{-1}$ is an injective object of $\mathcal{G}$. Let $0\rightarrow N^0\longrightarrow E^0(N^0)\longrightarrow E^1(N^0)\longrightarrow  \cdots $ be the minimal injective resolution of $N^0$ in $\mathcal{G}$. The following commutative diagram gives a monomorphism $\iota:N\longrightarrow E$ in $\mathcal{C}(\mathcal{G})$, which is a quasi-isomorphism,  where $E$ is the complex given by the lower row of the diagram:

$$\xymatrix{\cdots \ar[r] & 0 \ar[r] & N^{-1} \ar[r] \ar@{=}[d] & N^{0} \ar[r] \ar@{^(->}[d] & 0 \ar[r] \ar[d]& \cdots \\ \cdots \ar[r] & 0 \ar[r] & N^{-1} \ar[r] & E^{0}(N^{0}) \ar[r] & E^{1}(N^{0}) \ar[r] & \cdots   }$$

We then have an exact sequence $0\rightarrow N\stackrel{\iota}{\longrightarrow}E\longrightarrow E/N\rightarrow 0$ in $\mathcal{C}(\mathcal{G})$, where $E/N$ is acyclic and concentrated in degrees $\geq 0$. It follows that $\text{Hom}_{\mathcal{C}(\mathcal{G})}(M,E/N)\cong\text{Hom}_{\mathcal{C}(\mathcal{G})}(H^0(M)[0],E/N)=0$. As a consequence, we get an isomorphism $\text{Hom}_{\mathcal{C}(\mathcal{G})}(M,N)\cong\text{Hom}_{\mathcal{C}(\mathcal{G})}(M,E)$, which in turn yields an isomorphism $\text{Hom}_{\mathcal{K}(\mathcal{G})}(M,N)\cong\text{Hom}_{\mathcal{K}(\mathcal{G})}(M,E)$. But $\text{Hom}_{\mathcal{K}(\mathcal{G})}(M,E)\cong\text{Hom}_{\mathcal{D}(\mathcal{G})}(M,N)=\text{Hom}_{\mathcal{H}_\mathbf{t}}(M,N)$ since $\iota$ is a homotopically injective resolution of $N$.
\end{proof}

We can now prove our desired result.

\begin{teors}[{\bf 1.2}] \label{teor.HRS}
Let $\mathcal{G}$ be a Grothendieck category, let $\mathbf{t}=(\mathcal{T},\mathcal{F})$ be a torsion pair in $\mathcal{G}$ and let $\mathcal{H}_\mathbf{t}$ be the heart of the associated t-structure in $\mathcal{D}(\mathcal{G})$. The following assertions are equivalent:

\begin{enumerate}
\item $\mathcal{H}_\mathbf{t}$ is a Grothendieck category;
\item $\mathcal{F}$ is closed under taking direct limits in $\mathcal{G}$.
\end{enumerate}
\end{teors}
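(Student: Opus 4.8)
The implication (1)$\Rightarrow$(2) is exactly \cite[Theorem 4.8]{PS}, so my plan is to prove (2)$\Rightarrow$(1). I would first recall from \cite{PS} that $\mathcal{H}_\mathbf{t}$ is a cocomplete abelian category with a generator; in fact coproducts are computed in $\mathcal{D}(\mathcal{G})$, since $H^j$ commutes with coproducts and both $\mathcal{T}$ and $\mathcal{F}$ are closed under coproducts in $\mathcal{G}$, so that a coproduct of objects of $\mathcal{H}_\mathbf{t}$ formed in $\mathcal{D}(\mathcal{G})$ again lies in $\mathcal{H}_\mathbf{t}$. As a Grothendieck category is precisely an AB5 abelian category with a generator, the whole problem reduces to showing that direct limits are exact in $\mathcal{H}_\mathbf{t}$, and this is the only place where hypothesis (2) will intervene.

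The strategy is to compute direct limits as homotopy colimits and then read off their cohomology. Fix a directed system $(M_i,u_{ij})_{i\le j}$ in $\mathcal{H}_\mathbf{t}$. Using the first part of Lemma 1.1 I represent each $M_i$ by a two-term complex with $M_i^{-1}$ injective; then, by the second part of Lemma 1.1 (the targets being injective in degree $-1$), every transition morphism $u_{ij}$ is realized by an honest chain map $f_{ij}\colon M_i\to M_j$. Hence the canonical morphism $\phi\colon\bigoplus_{i\le j}M_i\to\bigoplus_i M_i$ whose $(i\le j)$-component is $\sigma_i-\sigma_j f_{ij}$ is an honest chain map between two-term complexes, and its mapping cone is an explicit complex concentrated in degrees $-2,-1,0$. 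Since $\varinjlim M_i=\operatorname{coker}_{\mathcal{H}_\mathbf{t}}(\phi)$ in any cocomplete category, and the cokernel in the heart of a morphism is the zeroth heart-cohomology of its cone, it suffices to identify $\operatorname{cone}(\phi)$.

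The key computation will be that $\operatorname{cone}(\phi)$ is the homotopy colimit of the system and that $H^n(\operatorname{cone}(\phi))\cong\varinjlim_i H^n(M_i)$ for every $n$. This holds because $\mathcal{G}$ is AB5, so the higher derived functors of $\varinjlim$ over the directed poset vanish and the spectral sequence computing the cohomology of the homotopy colimit collapses; with the chain-level description of $\phi$ furnished by Lemma 1.1 this can equally be checked by a direct filtered-colimit computation on the explicit cone. Here hypothesis (2) enters: $H^{-1}(\operatorname{cone}(\phi))\cong\varinjlim_i H^{-1}(M_i)$ lies in $\mathcal{F}$ \emph{precisely because} $\mathcal{F}$ is closed under direct limits, while $H^0(\operatorname{cone}(\phi))\cong\varinjlim_i H^0(M_i)$ lies in $\mathcal{T}$ automatically (torsion classes are always closed under direct limits) and $H^n(\operatorname{cone}(\phi))=0$ for $n\neq-1,0$. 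Thus $\operatorname{cone}(\phi)$ already belongs to $\mathcal{H}_\mathbf{t}$, whence it equals $\operatorname{coker}_{\mathcal{H}_\mathbf{t}}(\phi)=\varinjlim M_i$; in other words, direct limits in $\mathcal{H}_\mathbf{t}$ are computed on cohomology by $\varinjlim$ in $\mathcal{G}$.

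Exactness of direct limits would then follow at once. Given a directed system of short exact sequences $0\to L_i\to M_i\to N_i\to 0$ in $\mathcal{H}_\mathbf{t}$, each is a triangle in $\mathcal{D}(\mathcal{G})$, and the $u_{ij}$ induce a morphism between the two $\phi$-maps that is a morphism of triangles; the $3\times 3$ lemma for triangulated categories then produces a triangle on the three mapping cones, namely $\varinjlim L_i\to\varinjlim M_i\to\varinjlim N_i\to(\varinjlim L_i)[1]$. All three vertices lie in $\mathcal{H}_\mathbf{t}$, so this triangle is a short exact sequence in $\mathcal{H}_\mathbf{t}$, and $\varinjlim$ is exact. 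The main obstacle I expect is the central identification $H^n(\operatorname{cone}(\phi))\cong\varinjlim H^n(M_i)$: the lifts $f_{ij}$ are only coherent up to homotopy, so one cannot naively take a strict degreewise colimit of complexes, and it is exactly the homotopy-colimit viewpoint, combined with the AB5 property of $\mathcal{G}$ and the chain-level control supplied by Lemma 1.1, that is designed to bypass this incoherence.
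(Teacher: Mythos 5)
Your reduction of the problem to exactness of direct limits, and your use of Lemma~1.1 to obtain chain--level representatives, are sensible, but the central claim of the argument --- that $H^n(\operatorname{cone}(\phi))\cong\varinjlim_i H^n(M_i)$ for all $n$, where $\phi\colon\bigoplus_{i\le j}M_i\to\bigoplus_i M_i$ has components $\sigma_i-\sigma_jf_{ij}$ --- is false for a general directed poset, and this is a genuine gap. The map $\phi$ is not degreewise injective: every diagonal component $(i,i)$ is sent to $\sigma_i-\sigma_i=0$, and even after restricting to pairs $i<j$ the cocycle relations produce kernel elements (for the constant system $\mathbb{Z}\xrightarrow{\,\mathrm{id}\,}\mathbb{Z}\xrightarrow{\,\mathrm{id}\,}\cdots$ over $\mathbb{N}$, the element $m_{01}=m_{12}=1$, $m_{02}=-1$ lies in $\ker\phi$). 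Hence $H^{-1}(\operatorname{cone}(\phi))\supseteq\ker\phi\neq 0$ even when $\varinjlim H^{-1}(M_i)=0$, the cone is not in $\mathcal{H}_\mathbf{t}$, and the identification of $\varinjlim M_i$ with $\operatorname{cone}(\phi)$ collapses; what survives is only $\operatorname{coker}_{\mathcal{H}_\mathbf{t}}(\phi)=H^0_{\mathcal{H}_\mathbf{t}}(\operatorname{cone}(\phi))$, and extracting that truncation returns you to the original problem of controlling $H^{-1}$ of the colimit. The two--term cone is the Milnor telescope only for $\mathbb{N}$-indexed chains, where $\sigma_i-\sigma_{i+1}f_{i,i+1}$ is degreewise split mono; for an arbitrary directed poset the homotopy colimit is a bar--type complex over all chains $i_0\le\cdots\le i_n$, and writing it down requires the lifts to satisfy $f_{jk}f_{ij}=f_{ik}$ strictly. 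Lemma~1.1 lifts each transition morphism individually but provides no coherence among the lifts, so --- contrary to your closing sentence --- the homotopy--colimit viewpoint does not bypass the incoherence: no step of the proposal produces a strict diagram in $\mathcal{C}(\mathcal{G})$.

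The paper resolves exactly this point by a different device, which is the ingredient you are missing. It first reduces (using results of \cite{PS}) to showing that $H^{-1}\colon\mathcal{H}_\mathbf{t}\to\mathcal{G}$ preserves direct limits, and then invokes \cite[Corollary~1.7]{AR} to restrict attention to \emph{smooth well--ordered chains} $(M_\alpha)_{\alpha<\lambda}$, continuous at limit ordinals. Such a chain can be strictified by transfinite induction: at a successor step one lifts the single map $f_\alpha$ via Lemma~1.1 (along a totally ordered index set there are no compatibility conditions to violate), and at a limit step one takes the honest colimit in $\mathcal{C}(\mathcal{G})$, which \cite[Lemma~4.4]{PS} identifies with the colimit in $\mathcal{H}_\mathbf{t}$ and on which $H^{-1}$ commutes with $\varinjlim$ by AB5 in $\mathcal{G}$. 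Without some such reduction to a diagram shape on which strictification is possible, the cohomological computation of the colimit that your argument hinges on cannot be carried out.
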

\begin{proof}
By \cite[Theorem 4.8]{PS}, we just need to prove $2)\Longrightarrow 1)$ and, for that, it is enough to check  that the functor $H^{-1}:\mathcal{H}_\mathbf{t}\longrightarrow\mathcal{G}$ preserves direct limits. According to \cite[Corollary 1.7 and subsequent Remark]{AR}, we just need to consider direct systems $(M_\alpha )_{\alpha <\lambda}$ in $\mathcal{H}_\mathbf{t}$ such that $\lambda$ is a limit ordinal and, for each limit ordinal $\alpha <\lambda$, one has $M_\alpha =\varinjlim_{\beta <\alpha}M_\beta$. By the previous lemma, we can and shall assume that $M_\alpha$ is a complex $\cdots \longrightarrow 0\longrightarrow M_\alpha^{-1}\longrightarrow M_\alpha^0\longrightarrow 0 \longrightarrow \cdots$ concentrated in degrees $-1,0$ such that $M_\alpha^{-1}$ is an injective object of $\mathcal{G}$, for each $\alpha <\lambda$. The lemma also allows to assume that the connecting morphism $f_\alpha :M_\alpha\longrightarrow M_{\alpha +1}$ is a chain map, so that $(M_\alpha )_{\alpha <\lambda}$ is also a direct system in $\mathcal{K}(\mathcal{G})$.

For each category $\mathcal{C}$, let us denote by $[\lambda ,\mathcal{C}]$ the category of $\lambda$-direct systems in  $\mathcal{C}$. We shall construct a direct system $(\tilde{M}_\alpha )_{\alpha <\lambda}$ in $\mathcal{C}(\mathcal{G})$ satisfying the following conditions:

\begin{enumerate}
\item[a)] Each $\tilde{M}_\alpha$ is a complex concentrated in degrees $-1$ and $0$;
\item[b)] If $p:\mathcal{C}(\mathcal{G})\longrightarrow\mathcal{D}(\mathcal{G})$ is the canonical functor, then the induced functor $p_*:[\lambda ,\mathcal{C}(\mathcal{G})]\longrightarrow [\lambda ,\mathcal{D}(\mathcal{G})]$ takes $(\tilde{M}_\alpha )_{\alpha <\lambda}$ to $(M_\alpha )_{\alpha <\lambda}$.
\end{enumerate}
We construct the $\tilde{M}_\alpha$ and the connecting maps $\tilde{f}_\alpha :\tilde{M}_\alpha\longrightarrow\tilde{M}_{\alpha +1}$ by transfinite induction on $\alpha <\lambda$. For $\alpha$ nonlimit, we just put $\tilde{M}_\alpha =M_\alpha$. Suppose now that $\alpha$ is any ordinal for which $\tilde{M}_\beta$ has already being defined, for all $\beta\leq\alpha$ ($\alpha$ included!). In particular, we have $\tilde{M}_{\alpha +1}=M_{\alpha +1}$ and $\tilde{M}_\alpha$ is isomorphic to $M_\alpha$ in $\mathcal{D}(\mathcal{G})$. By Lemma (1.1)
, there is a chain map $\tilde{f}_\alpha :\tilde{M}_\alpha\longrightarrow\tilde{M}_{\alpha +1}$ whose image by the canonical map $\text{Hom}_{\mathcal{C}(\mathcal{G})}(\tilde{M}_\alpha ,\tilde{M}_{\alpha +1})\longrightarrow \text{Hom}_{\mathcal{D}(\mathcal{G})}(\tilde{M}_\alpha ,\tilde{M}_{\alpha +1})=\text{Hom}_{\mathcal{D}(\mathcal{G})}(\tilde{M}_\alpha ,M_{\alpha +1})\cong\text{Hom}_{\mathcal{D}(\mathcal{G})}(M_\alpha ,M_{\alpha +1})$ is the  morphism $f_\alpha :M_\alpha\longrightarrow M_{\alpha +1}$. Finally, if $\alpha <\lambda$ is a limit ordinal, we put $\tilde{M}_\alpha =\varinjlim_{\beta <\alpha}\tilde{M}_\beta$, where the last direct limit is taken in $\mathcal{C}(\mathcal{G})$. Note that, also in this limit case, $\tilde{M}_\alpha$ is a complex concentrated in degrees $-1$ and $0$. Moreover, by \cite[Lemma 4.4]{PS} and transfinite induction, we know that, when viewed as an object of $\mathcal{D}(\mathcal{G})$, the complex $\tilde{M}_\alpha$ is in $\mathcal{H}_\mathbf{t}$ and is the direct limit in this latter category of all the $\tilde{M}_\beta$, with $\beta <\alpha$.

The just constructed direct system $(\tilde{M}_\alpha )_{\alpha <\lambda}$ obviously satisfies the requirements a) and b) above. Again by \cite[Lemma 4.4]{PS}, we know that we have a canonical morphism  $\varinjlim_{\mathcal{H}_\mathbf{t}}M_\alpha\cong\varinjlim_{\mathcal{H}_\mathbf{t}}\tilde{M}_\alpha\longrightarrow\varinjlim_{\mathcal{C}(\mathcal{G})}\tilde{M}_\alpha$ in $\mathcal{D}(\mathcal{G})$ which is an isomorphism in this latter category. We then  get an isomorphism $H^{-1}(\varinjlim_{\mathcal{H}_\mathbf{t}}M_\alpha)\cong H^{-1}(\varinjlim_{\mathcal{C}(\mathcal{G})}\tilde{M}_\alpha)\cong\varinjlim H^{-1}(\tilde{M}_\alpha )\cong\varinjlim H^{-1}(M_\alpha )$ which is inverse of the canonical morphism $\varinjlim H^{-1}(M_\alpha )\longrightarrow H^{-1}(\varinjlim_{\mathcal{H}_\mathbf{t}}M_\alpha )$.
\end{proof}


\begin{thebibliography}{9999}

\bibitem{AR} {\sc AD\'AMEK, J.; ROSICK\'Y, J.}: Locally presentable and accessible categories. London Math. Soc. Lect. Note Ser., vol. \textbf{189}. Cambridge University Press (1994).


\bibitem{BBD} {\sc BEILINSON, A.; BERNSTEIN, J.; DELIGNE, P.}: 'Faisceaux pervers'. Astèrisque \textbf{100}, Soc. Math. France, Paris (1982), 5-171.



\bibitem{CG} {\sc COLPI, R.; GREGORIO, E.}: The heart of a cotilting torsion pair is a Grothendieck category. Preprint.

\bibitem{CGM} {\sc COLPI, R.; GREGORIO, E.; MANTESE, F.}:  On the heart of a faithful torsion pair. J. Algebra \textbf{307} (2007), 841-863.

\bibitem{CMT} {\sc COLPI, R.; MANTESE, F.; TONOLO, A.}:  When the heart of a faithful torsion pair is a module category. J. Pure and Appl. Algebra \textbf{215} (2011), 2923-2936.

\bibitem{HRS} {\sc HAPPEL, D.; REITEN, I.; SMALO, S.O.}: Tilting in abelian categories and quasitilted algebras. Memoirs AMS, vol. \textbf{120} (1996). 

\bibitem{PS} {\sc PARRA, C.; SAORIN, M.}: Direct limits in the heart of a t-structure: the case of a torsion pair. J. Pure and Appl. Algebra \textbf{219}(9) (2015), 4117-4143.






\end{thebibliography}
\end{document}